\newcommand{\card}[1]{{\left|{#1}\right|}}
\newcommand{\bra}[1]{{\left({#1}\right)}}
\newcommand{\floor}[1]{{\lfloor{#1}\rfloor}}
\newcommand{\set}[1]{{\left\{{#1}\right\}}}
\newcommand{\Z}{\mathbb{Z}}
\newcommand{\N}{\mathbb{N}}
\newcommand{\F}{\mathbb{F}}
\newcommand{\qbinom}[2]{\left[{#1 \atop #2}\right]_q}
\newcommand{\eqdef}{:=}
\newcommand{\lcm}{\textup{lcm}}
\title{Coefficients of Gaussian Polynomials modulo $N$}
\author{Dylan Pentland\thanks{Supported by NSF grant no. DMS-1519580.}\\
\small Department of Mathematics\\[-0.8ex]
\small Massachusetts Institute of Technology\\[-0.8ex] 
\small Massachusetts, U.S.A.\\
\small\tt dylanp@mit.edu}
\begin{document}

\maketitle

\begin{abstract}
Let $\qbinom{n}{k}$ be a $q$-binomial coefficient. Stanley conjectured that the function $f_{k}(n) = \#\left\{\alpha : [q^{\alpha}] \left[{n \atop k}\right]_q  \equiv R \pmod{N}\right\}$ is quasipolynomial for $N$ prime. We prove this for any integer $N$ and obtain an expression for the generating function $F_{k}(x)$ for $f_{k}(n)$.
\end{abstract}

\section{Introduction}
The $q$-analogue of the binomial coefficient is typically denoted $\qbinom{n}{k}$ and is defined by the rational expression 
\[\qbinom{n}{k} = \frac{[n]!}{[n-k]![k]!},\]
where $[n]! = \prod_{i=1}^{n}{(1-q^i)/(1-q)}$. These are polynomials with degree $k(n-k)$.

These polynomials appear in combinatorics and have connections to the theory of symmetric polynomials as well as representation theory. In particular, an important characterization is that they enumerate the Grassmannian $\mathbf{Gr}(k , \F_q^n)$:

\begin{theorem}\label{11}
The number of $k$-dimensional subspaces of $\F_q^n$ is $\qbinom{n}{k}$.
\end{theorem}

\noindent For a proof see, for example, \cite{stanley}. While $q$-binomial coefficients are common objects in combinatorics, recent works such as \cite{kronecker} or \cite{stanley2} have sparked additional interest in these objects and their coefficients.

In this paper, we investigate the behavior of these coefficients modulo some positive integer $N\in \N$. One motivation for this is the classical Lucas' theorem:
\begin{theorem}[Lucas' Theorem] For $p$ prime, let $n,k\in \N$ have base $p$ expansions $n = \sum_{i\ge 0}{n_i p^i}, k =\sum_{i\ge 0}{k_i p^i}$. Then
\[\binom{n}{k} \equiv \prod_{i\ge 0}{\binom{n_i}{k_i}} \pmod{p}.\]
\label{12}
\end{theorem}

\noindent By fixing $k$, the values of $\binom{n}{k} \pmod{p}$ can be shown to form a repeating sequence related to the base $p$ expansion of $k$. This extends to modulo $N$, as seen by the following corollary from \cite{kwongbinom}:

\begin{theorem}[Kwong \cite{kwongbinom}]
Let the prime factorization of $N$ be given by $\prod{p_i^{e_i}}$ for primes $p_i$. Then $\binom{n}{k}$ is purely periodic modulo $N$ for fixed $k$, with period
\[P = \prod{p_i^{e_i+b_i-1}},\]
where $b_i\in \N$, $p_i^{b_i-1}<k<p_i^{b_i}$.
\label{13}
\end{theorem}

\noindent Here, the term purely periodic means that a sequence $(x_n)_{n \in \N}$ has $x_{n}=x_{n+Q}$ for some $Q$ and all $n\in \N$. The $q$-binomial coefficients are an example of a ``$q$-analogue", in the sense that $\lim_{q\to 1}\qbinom{n}{k}=\binom{n}{k}$. As a result, it is reasonable to expect similar structured behavior modulo $p$ or even with general composites in the coefficients of $\qbinom{n}{k}$, since this shows $\binom{n}{k} = \lim_{q\to 1}{\qbinom{n}{k}}=\sum_{i\ge 0}{[q^i]\qbinom{n}{k}}$. Here, $[q^i]f(q)$ denotes the coefficient of $q^i$ in $f$.

We prove and generalize Conjecture \ref{16}, that the ``residue counting" function for these coefficients is a \textit{quasipolynomial}. From \cite{stanley}, we have the following definition of a quasipolynomial function:

\begin{definition}
A function $f: \N \to \mathbb{C}$ is \textit{quasipolynomial} with degree $d$ if
\[f(n) = c_d(n)n^d + c_{d-1}(n)n^{d-1} + \ldots + c_0(n)\]
where each $c_i(n)$ is a periodic function with integer period and $c_d(n)$ is not identically $0$. We call $Q$ a \textit{quasiperiod} of $f$ if it is a common period of all $c_i(n)$. Note that $Q$ is not unique, since $kQ$ is a quasiperiod for $k\in \N$.
\label{14}
\end{definition}

\noindent Equivalently, we can say $f(n)=P_i(n)$ for $n \equiv i \pmod{Q}$ where $P_i \in \Z[x]$. In order to state the main result (Theorem \ref{19}), we make the following definitions.

\begin{definition}
For a natural number $N$ that we call the \textit{modulus}, $R \in \Z/N \Z$, and $k \in \N$, we define
\[f_{k}(n) = \#\set{\alpha : [q^{\alpha}] \qbinom{n}{k}  \equiv R \pmod{N}}.\]
This function counts the number of coefficients congruent to $R$ modulo $N$.
\label{15}
\end{definition}

\begin{remark}
From \cite{kparts}, we see that $p_{\le k}(n)$, the number of partitions of $n$ with at most $k$ parts, is also an example of a quasipolynomial function.
\end{remark}

\begin{definition}
Define $\pi_N(k)$ as the minimal period of $p_{\le k}(n)$ modulo $N$.
\label{17}
\end{definition}

\begin{definition}

Define $\pi'_N(k)$ as follows:
\[\pi'_N(k) = \pi'_N(k-1)\lcm \left(N,\frac{\pi_N(k)}{\pi_N(k-1)}\right). \]
We set $\pi'_N(1)=1$.
\label{18}
\end{definition}

\noindent This definition makes it so that $N\mid \frac{\pi'_N(k)}{\pi'_N(k-1)}$ for $k>1$. Stanley originally conjectured the following in \cite{stanley3}:

\begin{conjecture}[Stanley \cite{stanley3}]
The function $f_{k}$ is quasipolynomial for $N$ prime.
\label{16}
\end{conjecture}

\noindent The following theorem, which generalizes Conjecture \ref{16}, is the main result of this paper. This is shown in Sections 3 and 4.

\begin{theorem}
For a modulus $N$, the function $f_{k}(n)$ is quasipolynomial, with a quasiperiod $\pi'_{N}(k)$ and degree one.
\label{19}
\end{theorem}

\noindent The idea will be to formulate an equivalent restatement in Theorem \ref{main}, which makes a more direct statement about the structure of the coefficients modulo $N$. In Section 5, we investigate the structure of the generating function
\[F_{k}(x) = \sum_{n\ge k}{f_{k}(n)x^n}.\]
In section 6, we investigate some asymptotics of the proven quasiperiod and conjectured minimal quasiperiod.

\section[Coefficients of low degree terms]{Coefficients of low degree terms in $\qbinom{n}{k}$}

We first try to understand the behavior of the coefficient of $q^i$ in $\qbinom{n}{k}$ for small $i$. The following result is well-known and follows from the identity $\sum_{i\ge 0}{p(n,k,i)q^i} = \qbinom{n+k}{k}$, where $p(n,k,i)$ denotes the number of partitions $\lambda \vdash i$ with at most $k$ parts and maximal part $\le n$. 

\begin{lemma}
 \label{21}
Let $n_0 , k\in \N$ be arbitrary, and $n \geq n_0 + k$. Then for $0 \leq i < n_0$, we have $[q^i]\qbinom{n}{k} = p_{\leq k}(i)$.
\end{lemma}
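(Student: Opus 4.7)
The plan is to interpret $\qbinom{n}{k}$ combinatorially via the standard identity
\[\qbinom{n}{k} = \sum_{\lambda \subseteq k \times (n-k)} q^{\card{\lambda}},\]
where the sum ranges over partitions $\lambda$ fitting inside a $k \times (n-k)$ rectangle, i.e., with at most $k$ parts, each of size at most $n-k$. This reduces the computation of $[q^i]\qbinom{n}{k}$ to enumerating such partitions with $\card{\lambda} = i$.

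With this interpretation in hand, the proof amounts to comparing two sets of partitions: those of size $i$ fitting inside the $k \times (n-k)$ rectangle, and those of size $i$ with at most $k$ parts (the latter being counted by $p_{\leq k}(i)$). The only difference is the upper bound $n-k$ on part sizes. Under the hypotheses $n \geq n_0 + k$ and $i < n_0$, we obtain $n - k \geq n_0 > i$, so every partition of $i$ automatically has parts of size at most $i < n - k$, making the rectangle constraint vacuous. The two sets therefore coincide, and $[q^i]\qbinom{n}{k} = p_{\leq k}(i)$.

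An alternative route, in case one prefers to avoid invoking the rectangle identity directly, starts from the product formula
\[\qbinom{n}{k} = \frac{1}{\prod_{j=1}^{k}(1-q^j)} \prod_{j=1}^{k}(1-q^{n-k+j}).\]
The first factor is precisely the generating function $\sum_{i \geq 0} p_{\leq k}(i)\, q^i$, while the second factor equals $1$ modulo $q^{n-k+1}$, since every nonconstant monomial in its expansion has degree at least $n-k+1$. The bound $i < n_0 \leq n-k$ ensures $i < n-k+1$, so only the constant term of the second factor contributes to $[q^i]\qbinom{n}{k}$, again yielding $p_{\leq k}(i)$.

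I do not anticipate any serious obstacle. The lemma is essentially a stabilization statement, saying that for each fixed low degree $i$ the coefficient $[q^i]\qbinom{n}{k}$ becomes independent of $n$ once $n$ is sufficiently large and equals the natural limiting value $p_{\leq k}(i)$. The only inequality one must verify is $n - k \geq n_0 > i$, which is immediate from the hypotheses.
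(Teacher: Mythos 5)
Your first argument is essentially identical to the paper's: both invoke the rectangle/box identity $\qbinom{j+k}{k}=\sum_{i\ge 0}p(j,k,i)q^i$ and observe that when $i<j=n-k$ the bound on part sizes is vacuous, so $p(j,k,i)=p_{\le k}(i)$. Your alternative via the product formula $\qbinom{n}{k}=\bigl(\prod_{j=1}^k(1-q^j)\bigr)^{-1}\prod_{j=1}^k(1-q^{n-k+j})$ is also correct and is a pleasantly self-contained generating-function route that avoids the combinatorial bijection, but the main argument is the same as the paper's.
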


\begin{remark}
A similar result is true for the last $n_0$ coefficients by the symmetry of the $q$-binomial coefficients.
\end{remark}
This warrants an investigation of the function $p_{\le k}(i)$ modulo $N$. The following theorem from \cite{NijWilf} shows that it is purely periodic, and determines the minimal period for primes. Then in \cite{kwongpart}, this is extended to prime powers. By the Chinese Remainder Theorem, understanding the behavior of $p_{\le k}(i)$ modulo prime powers is sufficient to understand its behavior modulo $N$.

\begin{theorem}[Kwong]
For a prime power $p^e$, fix a set $S=\set{s_0, s_1, \ldots s_l}$ with entries in $\mathbb{N}$. Let $p(n;S)$ be given by the generating function
\[P(x;S) \eqdef \prod_{s\in S}{\frac{1}{1-x^s}} = \sum_{n\ge 0}{p(n;S)x^n},\]
hence $p(n;S)$ is the number of partitions $\lambda$ with parts in $S$ and $|\lambda |=n$. Then $p(*;S)$ is purely periodic modulo $p^e$, with minimal period 
\[\pi_{p^e}(S) = p^{b_p(S)+e-1} L_p(S)\] 
where $b_p(S)$ is the smallest integer such that 
\[p^{b_p(S)} \ge \sum_{s\in S}{p^{\nu_p(s)}}\]
where $\nu_p(s)$ is the $p$-adic valuation of $s$ and $L_p(S)\eqdef \textup{lcm}(S)/p^{\nu_p(\textup{lcm}(S))}$ is the $p$-free part of $\textup{lcm}(S)$.
\label{22}
\end{theorem}
For a more detailed discussion, see \cite{kwongpart}. Lemma \ref{21} then shows that for $n_0$ sufficiently large, the first $n_0$ coefficients of $\qbinom{n}{k}$ for $n-k\ge n_0$ will follow a repeating pattern of period $\pi_{p^e}(k)=p^{b_p([k])+e-1} L_p([k])$ modulo $p^e$. Here, $[k] \eqdef \set{1,2,\ldots k}$.

It is worth noting that the statement is slightly incorrect: the theorem does not hold in the trivial case $k=1$ where $\pi_N(k)=1$. However, there seem to be no other errors otherwise with the proof. Fortunately, this case is simple and can be ignored. From now on, we have $k\ge 2$.

\begin{definition}
Fix $k$, a modulus $N \in \N$, and $n > \pi_{N}(k)$. Let $\mathcal{S}$ be the sequence of the first $\pi_N(k)$ coefficients of $\qbinom{n+k}{k}$ reduced mod $N$. It is given by $\mathcal{S} = (s_0, s_1, \ldots s_{\pi_{N}-1})$, where
$s_\alpha \equiv  [q^\alpha]\qbinom{n+k}{k} \pmod{N}$.

\label{buf}
\end{definition}

Theorem \ref{22} and Lemma \ref{21} show that $\mathcal{S}$ determines the periodic sequence $p_{\le k}(i)$ modulo $N$.

\begin{example}
One example of this sequence for $N=2$ and $k=3$ is shown in Figure \ref{table}.
\begin{figure}[ht]
\begin{center}

 \begin{tabular}{|c c c c c c c c c c c c|} 
 \hline
 $s_0$ & $s_1$ & $s_2$ & $s_3$ & $s_4$ & $s_5$ & $s_6$ & $s_7$ & $s_8$ & $s_9$ & $s_{10}$ & $s_{11}$  \\ [0.5ex] 
 \hline
 1 & 1 & 0 & 1 & 0 & 1 & 1 & 0 & 0 & 0 & 0 & 0 \\ 
 \hline
\end{tabular}
\caption{Values of $\mathcal{S}$ modulo $2$.}
\label{table}
\end{center}
\end{figure}

\end{example}

Next, we study $\mathcal{S}$ for prime powers $p^e$ as the modulus. The generating function of $p_{\le k}(n)$ is given by
\[P_{\le k}(q) \eqdef \sum_{n\ge 0}{p_{\le k}(n)q^n} = \frac{1}{\prod_{i\in[k]}{1-q^i}}.\]

\begin{lemma} $\textup{lcm}([k]) \mid \pi_{p^e}(k) $.
\label{24}
\end{lemma}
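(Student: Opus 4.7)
The plan is to invoke Theorem \ref{22} directly and reduce the claim to an inequality between $p$-adic valuations. From the theorem we have
\[ \pi_{p^e}(k) = p^{b_p([k])+e-1} L_p([k]), \]
while by definition $\textup{lcm}([k]) = p^{\nu_p(\textup{lcm}([k]))} L_p([k])$, since $L_p([k])$ is the $p$-free part of $\textup{lcm}([k])$. The $p$-free factors already match, so it suffices to show
\[ \nu_p(\textup{lcm}([k])) \le b_p([k]) + e - 1. \]

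First I would identify $\nu_p(\textup{lcm}([k])) = \max_{s \in [k]} \nu_p(s) = \lfloor \log_p k \rfloor$, since the largest power of $p$ not exceeding $k$ is $p^{\lfloor \log_p k \rfloor}$ and this is the element of $[k]$ of maximal $p$-adic valuation. Set $a = \lfloor \log_p k \rfloor$ for brevity.

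Next I would bound $b_p([k])$ from below using its defining inequality
\[ p^{b_p([k])} \ge \sum_{s \in [k]} p^{\nu_p(s)}. \]
The key observation is that $s = p^a$ lies in $[k]$ and contributes $p^{\nu_p(p^a)} = p^a$ to the sum. Since every term on the right is a positive integer, the whole sum is at least $p^a$, and so $b_p([k]) \ge a$. Combined with $e \ge 1$, this yields $b_p([k]) + e - 1 \ge a = \nu_p(\textup{lcm}([k]))$, which is the required inequality.

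There is no genuine obstacle here: once Theorem \ref{22} is in hand, the statement is essentially a matter of unpacking the definition of $b_p([k])$ and noticing that the single term coming from $s = p^{\lfloor \log_p k \rfloor}$ is already enough to force the bound. The only thing worth being careful about is keeping the roles of the $p$-part and $p$-free part of $\textup{lcm}([k])$ straight, so that one does not conflate $L_p([k])$ with $\textup{lcm}([k])$ itself.
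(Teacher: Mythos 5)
Your proof is correct and follows essentially the same route as the paper: reduce via Theorem~\ref{22} to an inequality of $p$-adic valuations, then note that the sum $\sum_{s\in[k]} p^{\nu_p(s)}$ is bounded below by its largest term $p^{\lfloor \log_p k\rfloor}$, which forces $b_p([k]) \ge \nu_p(\mathrm{lcm}([k]))$. You spell out the $p$-free factorization and the role of $e\ge 1$ a bit more explicitly, but the argument is the same.
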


\begin{proof}
By Theorem 2.2, it suffices to show that $b_p([k]) \ge \nu_p(\text{lcm}([k]))$. We can then see
\[p^{b_p([k])} \ge \sum_{i\in[k]}{p^{\nu_p(i)}} \ge p^{\max_{i \in [k]}{\nu_p(i)}} = p^{\nu_p(\text{lcm}([k])}.\]
Taking logs, the result follows.
\end{proof}

\begin{definition}
Define the operator $\Delta_{Q} : \Z[[q]] \rightarrow \Z[[q]]$ for $Q \in \N$ via the formula \begin{align*}
    (\Delta_{Q} F)(q)
    &= F(q) - q^{Q} F(q) \\
    &= \sum_{n\ge 0}{f(n)q^n} - \sum_{n\ge Q}{f(n-Q)q^n},
\end{align*}
This can be viewed as an analogue of the finite difference operator ($\Delta$, as in \cite{stanley} \S 1.9) acting on formal power series.
\label{25}
\end{definition}

\begin{example}
Consider the generating function $F(q) = \frac{q}{(1-q)^2} = \sum_{i\ge 0}{iq^i}$. Suppose we want to calculate $\Delta_5 F(q)$: then we have
\begin{align*}
\Delta_5 F(q) &= \frac{q}{(1-q)^2}-\frac{q^6}{(1-q)^2} \\
&= \frac{5q^5}{1-q}+(4q^4+3q^3+2q^2+1q).
\end{align*}
This demonstrates a key aspect of the $\Delta_Q$ operator: the lowest $Q$ monomials  remain unchanged, while the rest of the sequence can be viewed as a union of $Q$ subsequences with the traditional finite difference operator applied.
\end{example}
\begin{theorem}\label{26}
Fix $n,k$. Consider the sequence $\mathcal{S}$ where we take coefficients modulo $N=p^e$. Then $s_{\card{\mathcal{S}}-1}, \ldots s_{\card{\mathcal{S}}-\binom{k+1}{2}+1}$ are all $0$ modulo $N$, and $s_i = (-1)^{k+1} s_{\pi_N(k)-\binom{k+1}{2}-i}$ for $\pi_N(k)-\binom{k+1}{2}-i\ge 0$ and $i\ge 0$.
\end{theorem}
\begin{proof}
The main idea behind this result is to exploit the simple form of the generating function $P_{\le k}(q)$. We can re-write it as follows, letting $Q := \pi_{p^e}(k)$:
\[P_{\le k}(q) = \frac{1}{\prod_{i\in[k]}{1-q^i}} = \frac{\gamma(q)}{(1-q^{Q})^k}, \tag{1}\]
where we can obtain
\[\gamma(q) = \frac{(1-q^{Q})^k}{\prod_{i\in[k]}{1-q^i}} = \prod_{d \mid Q}{\Phi_d(q)^{f(d)}} \tag{2}\]
where $f(d) =k-\#\{i \in [k] : d \mid i\} \ge 0$ and $\Phi_d$ denotes the $d$th cyclotomic polynomial. To show (2), note that $f(d)$ accounts for every factor in the denominator, and is bounded below by $0$. This follows from Lemma \ref{24} and the fact that each cyclotomic factor of the denominator can appear at most $k$ times (at most once for each factor $(1-q^i)$). Thus we conclude that $\gamma(q) \in \Z[q]$ and that $\deg \gamma(q) = kQ - \binom{k+1}{2}$. Using $\Delta_Q$ as in Definition \ref{25}, we obtain
\[ \Delta_{Q}^k P_{\le k}(q) = \gamma(q).\]
Using the fact that $P_{\le k}(q)$ can be written as $P_{\le k}(q) \equiv \frac{\gamma_0(q)}{1-q^{Q}} \pmod{p^e}$ for some unique $\gamma_0(q)$ with $\deg \gamma_0 < Q$ by Theorem \ref{22}, we can see that $\Delta_{Q} P_{\le k}(q) \equiv \gamma_0(q) \pmod{p^e}$. It follows from this that 
\[\Delta_{Q}^{k} P_{\le k}(q) \equiv \sum_{i\ge 0}{(-1)^i\binom{k-1}{i} \gamma_0(q) q^{Q i}} \pmod{p^e},\]
using the formula for $\Delta^k f(n)$ from \cite{stanley} in \S 1.9. This is straightforward to verify using induction. Thus, for $r \in \Z / Q \Z$ we have
\[[q^{r+Q(k-1)}]\gamma(q) \equiv (-1)^{k-1}[q^r]\gamma(q) \pmod{p^e}.\]
Knowing that $\deg \gamma = kQ - \binom{k+1}{2}$, there must be $\binom{k+1}{2}-1$ zeroes at the end of $\mathcal{S}$. Furthermore, the polynomial $\gamma(q)$ can be shown to be symmetric using (2) and the symmetry of the cyclotomic polynomials (this is only true for $\Phi_d$ when $d>1$, but $d=1$ is not an issue as $f(1)=0$). Referring to Figure \ref{sym}, this shows the symmetry of $\mathcal{S}$ when the trailing zeroes are ignored: by the symmetry of $\gamma(q)$, the elements with label $i$ in Figure \ref{sym} are equal. These are also identical instances of $\mathcal{S}$ without the trailing zeroes up to sign, so $\set{s_{0}, \ldots s_{\card{\mathcal{S}}-\binom{k+1}{2}}}$ is symmetric or ``anti-symmetric" about its center. Precisely, this says that $s_i = (-1)^{k+1} s_{\pi_N(k)-\binom{k+1}{2}-i}$.

\begin{figure}[ht]
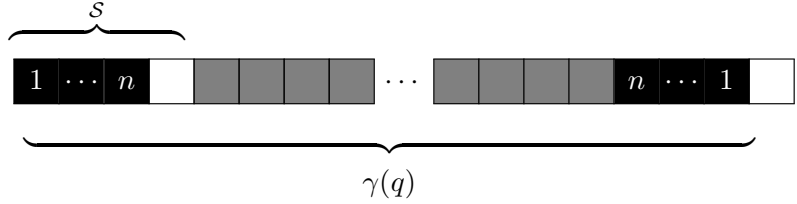


\ytableausetup{centertableaux}

\[\overbrace{\quad\quad\quad\quad\quad\quad}^{\mathcal{S}}\quad\quad\quad\quad\quad\quad\quad\quad\quad\quad\quad\quad\quad\quad\quad\quad\quad\quad\quad\quad\quad\]
\begin{center}
\begin{ytableau}
*(black) \color{white}{1} & *(black) \color{white}{\cdots} & *(black) \color{white}{\kappa} & & *(gray) & *(gray) & *(gray) & *(gray)\\
\end{ytableau}
$\cdots$
\begin{ytableau}
*(gray) & *(gray) & *(gray) & *(gray) & *(black) \color{white}{\kappa} & *(black) \color{white}{\cdots} & *(black) \color{white}{1} & \\
\end{ytableau}
\end{center}
\[\underbrace{\quad\quad\quad\quad\quad\quad\quad\quad\quad\quad\quad\quad\quad\quad\quad\quad\quad\quad\quad\quad\quad\quad\quad\quad\quad}\quad\]
\[\gamma(q)\quad\]
\caption{Symmetry in $\mathcal{S}$. White boxes represent sections of zeroes modulo $p^e$, gray sections represent the other values of coefficients of $\gamma(q)$ modulo $p^e$. The coefficients are ordered from left to right by increasing associated powers of $q$. We define $\kappa=Q-\binom{k+1}{2}$, so that the white numbers $1,2, \ldots,\kappa$ enumerate coefficients in the black sections.}
\label{sym}
\end{figure}

\end{proof}

These ideas can be generalized using the Chinese Remainder Theorem.

\begin{lemma}
\label{27}
Partitions with at most $k$ parts are purely periodic modulo $N$ for all $N \in \N$, with period
\[ \pi_N(k) = \underset{p|N}{\lcm}{\bra{\pi_{p^{\nu_p(N)}}(k)}}.\]
\end{lemma}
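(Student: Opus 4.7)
The plan is to reduce the statement to the prime-power case handled by Theorem \ref{22} via the Chinese Remainder Theorem. Write $N = \prod_{p \mid N} p^{\nu_p(N)}$, so that two integers are congruent modulo $N$ if and only if they are congruent modulo $p^{\nu_p(N)}$ for every prime $p$ dividing $N$.

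First, I would show that $L \eqdef \lcm_{p \mid N}\bra{\pi_{p^{\nu_p(N)}}(k)}$ is a period of $p_{\le k}(n)$ modulo $N$. Fix $p \mid N$ and set $e = \nu_p(N)$. Since $\pi_{p^{e}}(k) \mid L$ and Theorem \ref{22} says that $p_{\le k}$ is purely periodic modulo $p^e$ with minimal period $\pi_{p^e}(k)$, we get $p_{\le k}(n+L) \equiv p_{\le k}(n) \pmod{p^e}$ for every $n \ge 0$. As this holds simultaneously for every prime power exactly dividing $N$, the Chinese Remainder Theorem yields $p_{\le k}(n+L) \equiv p_{\le k}(n) \pmod{N}$ for all $n \ge 0$, establishing pure periodicity with period $L$.

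Next, I would verify minimality. Suppose $P$ is any period of $p_{\le k}$ modulo $N$. For each prime $p \mid N$, reducing the congruence $p_{\le k}(n+P) \equiv p_{\le k}(n) \pmod{N}$ modulo $p^{\nu_p(N)}$ shows that $P$ is also a period modulo $p^{\nu_p(N)}$. By the minimality clause of Theorem \ref{22}, this forces $\pi_{p^{\nu_p(N)}}(k) \mid P$ for every $p \mid N$, and hence $L \mid P$. Therefore $L$ is the minimal period, completing the proof.

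The argument is essentially bookkeeping on top of Theorem \ref{22}; there is no genuine obstacle. The only subtlety worth flagging is that one must invoke \emph{pure} (not merely eventual) periodicity in the prime-power case so that CRT can be applied starting from $n=0$, but this is already part of the conclusion of Theorem \ref{22}.
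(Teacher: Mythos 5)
Your proof is correct and takes exactly the approach the paper has in mind: the lemma is stated without an explicit proof there, but the surrounding text (and the proof of Corollary~\ref{28}) make clear it is meant as a routine Chinese Remainder Theorem reduction to the prime-power case of Theorem~\ref{22}. Your two-step argument (lcm is a period; any period is divisible by each $\pi_{p^{\nu_p(N)}}(k)$, hence by the lcm) correctly supplies both pure periodicity and minimality, and your remark about needing pure rather than eventual periodicity in the prime-power case is the right subtlety to note.
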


\begin{corollary}
Theorem \ref{26} also holds for $\mathcal{S}$ for arbitrary moduli $N$.
\label{28}
\end{corollary}

\begin{theorem}
Let $k\ge 0$ and $N$ be odd.  If $k$ is odd and $\textup{gcd}\bra{\frac{\pi_N(k+1)}{\pi_N(k)}, N}>1$, then we have
\[\frac{\pi_N(k+1)}{\pi_N(k)}\sum_{i\in \Z/\pi_N(k)\Z}{p_{\le k}(i)}\equiv 0 \pmod{N}.\]
Otherwise we have the stronger result $\sum_{i\in \Z/\pi_N(k)\Z}{p_{\le k}(i)}\equiv 0 \pmod{N}$.
\label{29}
\end{theorem}
\begin{proof}
First, we prove this for when $k$ is even. We have two cases. First, suppose $\frac{\pi_N(k)-\binom{k+1}{2}+1}{2}\not \in \Z$. This means there exists a ``central" element that is self-inverse ($x=-x$ mod $N$) in $\mathcal{S}$ by Corollary \ref{28}. Since $N$ is odd it is $0$ mod $N$. Using Corollary \ref{28} we pair all other terms in $\sum_{i\in \Z/\pi_N(k)\Z}{p_{\le k}(i)}$ in zero-sum pairs.

Otherwise, $\frac{\pi_N(k)-\binom{k+1}{2}+1}{2}\in \Z$. There is no central entry, and pairing via Corollary \ref{28} suffices to show $\sum_{i\in \Z/\pi_N(k)\Z}{p_{\le k}(i)} \equiv 0 \pmod{N}$.

For $k$ odd, we use a different method since $(-1)^{k+1}=1$. We have
\begin{align*}
\sum_{i\in \Z/\pi_N(k+1)\Z}{p_{\le k+1}(i)-p_{\le k}(i)} &\equiv \sum_{i\in \Z/\pi_N(k+1)\Z}{p_{=(k+1)}(i)} \pmod{N}\\
&\equiv \sum_{i\in \Z/\pi_N(k+1)\Z}{p_{\le k+1}(i-(k+1))} \pmod{N}\\
&\equiv 0 \pmod{N}. \quad (\text{by even case, shift invariance})
\end{align*}
Thus, we obtain
\[\frac{\pi_N(k+1)}{\pi_N(k)}\sum_{i\in \Z/\pi_N(k)\Z}{p_{\le k}(i)}\equiv 0 \pmod{N}\]
Unless $\text{gcd}\bra{\frac{\pi_N(k+1)}{\pi_N(k)}, N}>1$, the stronger statement  $\sum_{i\in \Z/\pi_N(k)\Z}{p_{\le k}(i)}\equiv 0 \pmod{N}$ holds since $\frac{\pi_N(k+1)}{\pi_N(k)}$ would be invertible modulo $N$.
\end{proof}

\begin{corollary}
Suppose $\sum_{i\in \Z/\pi_N(k)\Z}{p_{\le k}(i)}\equiv 0\pmod{N}$ for odd $N$. Then the same holds for modulo $2N$ and $\pi_{2N}$ given $\frac{\pi_2(k)-\binom{k+1}{2}+1}{2}\in \Z$. \label{210}
\end{corollary}

\begin{proof}
Because $\pi_N(k)\mid \pi_{2N}(k)$, we get that $\sum_{i\in \Z/\pi_{2N}(k)\Z}{p_{\le k}(i)}\equiv 0\pmod{N}$. Next, consider the sum modulo $2$. We have $\sum_{i\in \Z/\pi_2(k)\Z}{p_{\le k}(i)} \equiv 0 \pmod{2}$, since modulo $2$ the symmetry relation in Theorem \ref{26} becomes $s_i \equiv - s_{\pi_2(k)-\binom{k+1}{2}-i}$ because $1\equiv -1$ modulo $2$ - hence, the reasoning in Theorem \ref{29} even $k$ applies to all $k$. Since $\pi_2(k) \mid \pi_{2N}(k)$, we see that
\[\sum_{i\in \Z/\pi_{2N}(k)\Z}{p_{\le k}(i)}\equiv 0\pmod{2}.\]
Using the Chinese Remainder Theorem, this is $0$ modulo $2N$.
\end{proof}

\section[Decomposition into sections]{Decomposition of $\qbinom{n}{k}$}

In this section and the next, we exploit the results from Section 2 regarding the periodicity of $\mathcal{S}$ and the structure of $\mathcal{S}$ (as described by Theorem \ref{26}) in order to prove Theorem \ref{18}.

\begin{definition}
For a modulus $N$, we define the function $\mathcal{L}_{k}^{(i)}$ by
$$f_{k}(n) = \mathcal{L}_{k}^{(i)}\left( \frac{n-i}{\pi'_N(k)}\right),$$
if $n\equiv i \pmod{\pi'_N(k)}$.
\label{31}
\end{definition}

\begin{remark}
The change of variables $n \mapsto \frac{n-i}{\pi'_{N}(k)}$ is used to simplify proofs.
\end{remark}
The aim is now to show that the functions $\mathcal{L}_{k}^{(i)}$ are linear, from which it follows by definition that $f_{k}$ is quasipolynomial. To do this, we will use the following general strategy:

\begin{itemize}
\item Partition the coefficients of $\qbinom{n+k}{k}$ into different classes with periodic behavior.
\item Using the periodicity of the first $\pi_N(k)$ coefficients (by Lemma \ref{27}), inductively show that these sections are also periodic using a partition decomposition (Lemma \ref{34}).
\item Use this last fact to show that $n \mapsto n+\pi'_N(k)$ changes $f_{k}(n+r)$ a constant amount depending only on $r$.
\item Conclude $f_{k}$ is quasipolynomial, since the previous point shows $\mathcal{L}_{k}^{(i)}$ are linear.
\end{itemize}

We begin with the division of coefficients in $\qbinom{n+k}{k}$ into different sections.

\begin{definition}
The $i$th \textit{section} of the $q$-binomial coefficient $\qbinom{n+k}{k}$ is the sequence of coefficients denoted by $\mathsf{S}_i$ with $j$th term given by
\[p_{\le k}^{(i)}(j) = [q^{in+j}]\qbinom{n+k}{k},\]
where $j\in \mathbb{Z}/n\mathbb{Z}$. As a special case, $\mathsf{S}_0$ is just a concatenation of copies of $\mathcal{S}$.
\label{32}
\end{definition}

Recall the identity
\[\sum_{i\ge 0}{p(n,k,i)q^i} = \qbinom{n+k}{k},\]
where $p(n,k,i)$ denotes the number of partitions $\lambda \vdash i$, with at most $k$ parts and maximal part $\le n$. 

This definition allows us to loosely determine a section by saying terms in the sequence contain the number of partitions which fit in a $n\times k$ box of size $|\lambda|=l$ for $l$ such that there exists a partition $\lambda \vdash l$ covering $i$ complete rows but no partition covering $i+1$ rows.

\begin{definition}
\label{33}
Let $X=(x_0, \ldots , x_{|X|-1})$ and $Y=(y_0, \ldots , y_{|Y|-1})$ be finite sequences. The concatenation operator $\oplus$ is defined as $X \oplus Y = (x_0, x_1, \ldots x_{|X|-1}, y_0, y_1, \ldots y_{|Y|-1}).$
\end{definition}
We then make the following decomposition of $\mathsf{S}_i$ that proves useful:
\[\mathsf{S}_i = \mathsf{B}_i^1 \oplus \mathsf{B}_i^2 \oplus \ldots \oplus \mathsf{B}_i^l \oplus \mathsf{R}_i,\]
where the $\mathsf{B}_i^j$ are $\pi'_N(k)$-length subsequences and $\mathsf{R}_i$ is the remainder after these $l=\floor{\frac{n}{\pi'_N(k)}}$ consecutive subsequences are removed from $\mathsf{S}_i$. Informally, if we regard $\qbinom{n+k}{k}$ as a sequence ordered by the associated exponents of $q$, we can relate $X=\bigoplus_{i\in [k]}{\mathsf{S}_{i-1}}\oplus{(1)}$ to its corresponding $q$-binomial coefficient. Here, $(1)$ is just a sequence only containing $1$. We can index $X$ starting at $0$, obtaining
\[\qbinom{n+k}{k} = \sum_{x_i \in X}{x_i q^i}.\]
The net result of this decomposition is illustrated in Figure \ref{fig:sb}.

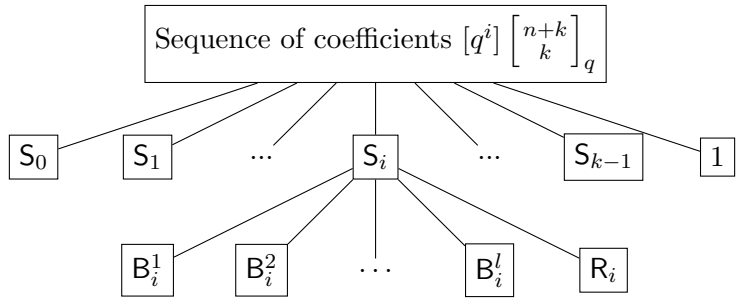
\begin{figure}[ht]
\begin{center}
\begin{tikzpicture}[every node/.style={rectangle,draw}]
\node {Sequence of coefficients $[q^i]\qbinom{n+k}{k}$}
  child {node {$\mathsf{S}_0$} }
  child { node {$\mathsf{S}_1$} }
  child { node[draw=none] {...} }
  child { node {$\mathsf{S}_i$}
    child {node {$\mathsf{B}_i^1$} }
    child {node {$\mathsf{B}_i^2$} }
    child {node[draw=none] {$\ldots$} }
    child {node {$\mathsf{B}_i^l$} } 
    child {node {$\mathsf{R}_i$} } }
  child { node[draw=none] {...} }
  child { node {$\mathsf{S}_{k-1}$} }
  child { node {${1}$} };

\end{tikzpicture}

\caption{Decomposition of a $q$-binomial coefficient into sections modulo $N$. Here, edge connections denote concatenation (as per Definition \ref{33}) from left to right and $l := \floor{\frac{n}{\pi'_N(k)}}$.}
\label{fig:sb}
\end{center}
\end{figure}
\section[Proving the main theorem]{Proving $f_{k}$ is quasipolynomial}

Using the definitions from Section 3, we investigate the structure of each individual section.

\begin{definition} Fix a $q$-binomial coefficient $\qbinom{n+k}{k}$. Let $\mathcal{P}_{i,m}^{\textup{bad}}(j)$ be the set containing all pairs of partitions $(\lambda, \mu)$ such that
\begin{itemize}
\item $|\lambda|+|\mu|=mn+j$.
\item $\lambda$ has at most $k$ parts each at most $n$, of which $i$ are equal to $n$.
\item $\mu$ has exactly $i$ parts.
\end{itemize}
\label{pbad}
\end{definition}

\begin{lemma}
Fix $n,m, k$. For $\qbinom{n+k}{k}$ we have the following identity for the associated functions $p_{\le k}^{(m)}$:
\[p_{\le k}^{(m)}(j) = p_{\le k}(mn+j)- \sum_{i\in [m]}{\#\mathcal{P}^{\textup{bad}}_{i,m}(j)}.\] \label{34}
\end{lemma}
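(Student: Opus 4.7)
The plan is to prove the identity via an explicit bijection. From the generating function identity $\sum_{i \ge 0} p(n,k,i) q^i = \qbinom{n+k}{k}$ used in Lemma \ref{21}, the coefficient $p_{\le k}^{(m)}(j) = [q^{mn+j}]\qbinom{n+k}{k}$ counts partitions of $mn+j$ with at most $k$ parts each at most $n$, while $p_{\le k}(mn+j)$ counts partitions of $mn+j$ with at most $k$ parts but no bound on part size. Their difference is therefore the number of \emph{bad} partitions of $mn+j$ with at most $k$ parts having at least one part strictly exceeding $n$. It suffices to biject such bad partitions with the disjoint union $\bigsqcup_{i \in [m]} \mathcal{P}^{\textup{bad}}_{i,m}(j)$.

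For the bijection, given a bad partition $\nu = (\nu_1 \ge \nu_2 \ge \cdots)$, let $i$ be the number of parts strictly greater than $n$, so $\nu_1, \ldots, \nu_i > n \ge \nu_{i+1}$. Define
\[\mu = (\nu_1 - n, \ldots, \nu_i - n), \qquad \lambda = (\underbrace{n, \ldots, n}_{i}, \nu_{i+1}, \nu_{i+2}, \ldots).\]
Then $\mu$ has exactly $i$ positive parts, $\lambda$ has at most $k$ parts each at most $n$ with its top $i$ entries equal to $n$, and $|\lambda|+|\mu|=mn+j$, so $(\lambda,\mu) \in \mathcal{P}^{\textup{bad}}_{i,m}(j)$. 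The inverse takes $(\lambda,\mu)$ to $\nu$ defined by $\nu_a = n + \mu_a$ for $a \in [i]$ and $\nu_a = \lambda_a$ for $a > i$; this sequence is weakly decreasing because $\mu$ is a partition and $\nu_i = n + \mu_i > n \ge \lambda_{i+1}$, and a direct computation shows the two maps are mutual inverses.

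Two checks remain. First, one must verify that $i \in [m]$: the bound $i \ge 1$ holds because $\nu$ is bad, and $i \le m$ follows from $i(n+1) \le \sum_{a=1}^i \nu_a \le mn+j < (m+1)n$, where the last inequality uses $0 \le j < n$ as ensured by the indexing convention $j \in \Z/n\Z$ from Definition \ref{32}. Second, the phrase ``of which $i$ are equal to $n$'' in Definition \ref{pbad} must be read as ``the top $i$ parts of $\lambda$ equal $n$'' (equivalently, $\lambda$ has at least $i$ parts equal to $n$); under a strict ``exactly $i$'' reading, a bad partition such as $(n+1, n, \ldots)$ would not be recorded in any $\mathcal{P}^{\textup{bad}}_{i,m}(j)$, breaking the count. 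I expect this interpretive alignment to be the main subtlety of the proof; once it is fixed, each bad partition $\nu$ contributes exactly once to the sum, via the unique $i$ equal to the number of its parts strictly exceeding $n$.
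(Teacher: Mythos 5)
Your proof is essentially the paper's argument: both decompose a bad partition $\nu$ of $mn+j$ by slicing at column $n$ into a pair $(\lambda,\mu)$, with $i$ the number of parts of $\nu$ exceeding $n$, and show the resulting map is a bijection onto $\bigsqcup_{i\in[m]}\mathcal{P}^{\textup{bad}}_{i,m}(j)$ (the paper simply presents the direction $(\lambda,\mu)\mapsto\lambda+\mu$ first and then verifies injectivity and surjectivity). Your note that ``of which $i$ are equal to $n$'' in Definition~\ref{pbad} must mean the top $i$ parts of $\lambda$ equal $n$ (so $\lambda$ may have further parts equal to $n$) is the correct reading and matches the paper's own surjection step, which sets $\lambda = \{\pi_\alpha : \pi_\alpha \le n\} \cup \{n : \pi_\alpha > n\}$.
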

\begin{proof}
Let $S$ be the set of partitions counted by $p_{\le k}^{(m)}(j)$ and $S'$ be defined similarly for $p_{\le k}(mn+j)$. It is clear that $S\subseteq S'$, so we wish to show that $\sum_{i\in [m]}{\#\mathcal{P}^{\textup{bad}}_{i,m}(j)}$ enumerates all of the additional partitions that ``leave the $n\times k$ box" or have some part greater than $n$. Consider Figure \ref{lemma} below.

\begin{figure}[ht]
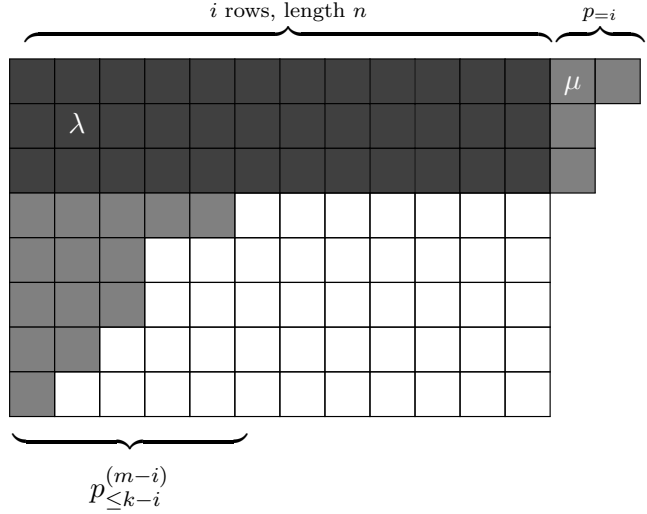

$$\quad \overbrace{\quad \quad \quad \quad \quad \quad \quad \quad \quad \quad \quad \quad \quad \quad \quad \quad\quad \quad }^{i \text{ rows, length }n} \overbrace{\quad \quad \quad }^{p_{=i}} \quad$$
\begin{center}
\ytableausetup{centertableaux}
\begin{ytableau}
*(darkgray) & *(darkgray) & *(darkgray) & *(darkgray) & *(darkgray) & *(darkgray) & *(darkgray) & *(darkgray) & *(darkgray) & *(darkgray) & *(darkgray) & *(darkgray) & *(gray) \textcolor{white}{\mu} & *(gray) \\
*(darkgray) & *(darkgray) \textcolor{white}{\lambda} & *(darkgray) & *(darkgray) & *(darkgray) & *(darkgray) & *(darkgray) & *(darkgray) & *(darkgray) & *(darkgray) & *(darkgray) & *(darkgray) & *(gray)\\
*(darkgray) & *(darkgray) & *(darkgray) & *(darkgray) & *(darkgray) & *(darkgray) & *(darkgray) & *(darkgray) & *(darkgray) & *(darkgray) & *(darkgray) & *(darkgray) & *(gray)\\
*(gray) & *(gray) & *(gray) & *(gray) & *(gray) & & & & & & & \\
*(gray) & *(gray) & *(gray) & & & & & & & & & \\
*(gray) & *(gray) & *(gray) & & & & & & & & & \\
*(gray) & *(gray) & & & & & & & & & & \\
*(gray) & & & & & & & & & & & \\
\end{ytableau}
\end{center}
$$\underbrace{\quad\quad\quad\quad\quad\quad\quad\quad}\quad\quad\quad\quad\quad\quad\quad\quad\quad\quad\quad\quad\quad\quad$$
$$\text{in an } n\times(k-i) \text{ box} \quad\quad\quad\quad\quad\quad\quad\quad\quad\quad\quad\quad\quad$$

\caption{Classification of partitions in $\mathcal{P}^{\textup{bad}}_{i,m}(j)$}
\label{lemma}
\end{figure}

Figure \ref{lemma} depicts a pair $(\lambda, \mu) \in \mathcal{P}^{\textup{bad}}_{i,m}(j)$. Here, $\lambda$ is represented by the shaded boxes inside the $n \times k$ box. The darker boxes depict the $i$ parts of $\lambda$ that are exactly $n$, while the lighter gray boxes below depict the part of $\lambda$ that can vary. Outside of the $n\times k$ boxes is $\mu$, with precisely $i$ parts. As labelled in the diagram, it is easy to see that $\lambda$ are enumerated by $p_{\le k-i}^{(m-i)}$ and $\mu$ are enumerated by $p_{=i}$.
Construct a partition $\Pi = \lambda+\mu$ via part-wise addition. This is counted in $S'$ by $p_{\le k}$ but not in $S$ since it \textit{must} leave the $n\times k$ box. Thus, sending $(\lambda,\mu) \in \bigcup_{i\in [m]}{\mathcal{P}^{\textup{bad}}_{i,m}(j)}$ to $\Pi=\lambda+\mu$ is a map $\phi$ from $\bigcup_{i\in [m]}{\mathcal{P}^{\textup{bad}}_{i,m}(j)}$ to $S'\setminus S$. We claim $\phi$ is a bijection. It is not too difficult to see that $\phi$ is an injection: if $\lambda+\mu = \lambda'+\mu'$ then $\mu$ and $\mu'$ have the same number of parts and from this it is evident $\mu=\mu', \lambda=\lambda'$.

Now we show $\phi$ is a surjection. Take a ``bad" partition $\Pi = \{\pi_1, \ldots , \pi_k\}$ with $|\Pi|=mn+j$ leaving the box. Such a partition must leave the box for the first $i$ rows for some $i\in [m]$ (we cannot have $i>m$, since $|\Pi| = mn+j\le (m+1)n$). Setting $\mu = \{\pi_\alpha - n : \pi_\alpha>n\}$ and $\lambda = \{ \pi_\alpha : \pi_\alpha \le n \} \cup \{n : \pi_\alpha>n\}$, we construct a pair $(\lambda, \mu)$. Both $\lambda,\mu$ satisfy the last two conditions of Definition \ref{pbad} due to the construction. The first condition $|\lambda|+|\mu|=mn+j$ is also satisfied, as
\[|\lambda|+|\mu|=\sum_{\pi_\alpha>n}{(\pi_\alpha-n)+n}+\sum_{\pi_\alpha\le n}{\pi_\alpha}=|\Pi|=mn+j.\]
Thus $(\lambda,\mu) \in \mathcal{P}^{\textup{bad}}_{i,m}(j)$, and $\lambda+\mu=\Pi$.

Thus, we have a bijection $\phi$ from $\bigcup_{i\in [m]}{\mathcal{P}^{\textup{bad}}_{i,m}(j)}$ to $S'\setminus S$, and it follows that 
\[\sum_{i\in [m]}{\#\mathcal{P}^{\textup{bad}}_{i,m}(j)}=|S'\setminus S|.\] \end{proof}

The following is a restatement of Theorem \ref{19} and is the main result. 
\begin{theorem}
Let $N\in \N$ be the modulus of $f_{k}$. Then $f_{k}$ is quasipolynomial, with quasiperiod $\pi_{N}'(k)$. Additionally, all $\mathcal{L}_{k}^{(i)}$ are linear functions. \label{main} \end{theorem} \begin{proof}

Let $Q \eqdef \pi_{N}'(k)$. To prove the claim, the central idea of the argument is to show that in the section $\mathsf{S}_i$ of $\qbinom{Ql+r+k}{k}$ we have $\mathsf{B}_i^1\equiv \mathsf{B}_i^2\equiv \ldots \equiv \mathsf{B}_i^l \pmod{N}$. From this fact, we make a simple argument that shows the claim. We use the $q$-binomial coefficient $\qbinom{n+k}{k}=\qbinom{Ql+k+r}{k}$ so that we can read off that $\mathsf{B}_i$ have length $Q$,  $\mathsf{R}_i$ has length $r$, and that $l$ is the number of $\mathsf{B}_i$ in the decomposition of $\mathsf{S}_i$.

To prove that $\mathsf{B}_i^1\equiv \mathsf{B}_i^2\equiv \ldots \equiv \mathsf{B}_i^l \pmod{N}$, we induct on the indices $m=1, 2, \ldots, k-1$ of the sections, holding $k$ fixed, and then induct on $k$.

In \S 2, we already showed that $\mathsf{S}_0$ has the aforementioned property for all $k$ by considering partitions with at most $k$ parts. One can also show that $\mathsf{B}_i^1=\mathsf{B}_i^2=\ldots = \mathsf{B}_i^l$ holds when $k=2$ by explicit computation of $p_{\le 2}(j)=\floor{\frac{j}{2}}+1.$ This establishes the base cases $m=*, k=2$ and $m=0, k=*$. 

We show the claim holds for $\mathsf{S}_m$ assuming it holds for $\mathsf{S}_{i}$ ($i<m$) and all smaller $k$. Using Lemma \ref{34}, we have for $j \in \mathbb{Z}/(Ql+r)\mathbb{Z}$,
\begin{align*}
p_{\le k}^{(m)}(j) &\equiv p_{\le k}(j+mr)- \sum_{i\in [m]}{\#\mathcal{P}^{\textup{bad}}_{i,m}(j)} \\
&\equiv p_{\le k}(j+mr)-\sum_{i\in[m]}{\sum_{\ell+\ell' = C_{i,m}(j)}{[q^\ell]\qbinom{n+(k-i)}{k-i}p_{=i}(\ell')}} \pmod{N} \tag{3} \\
\end{align*}
where $C_{i,m}(j) = |\lambda|+|\mu|-ni$ for $(\lambda, \mu) \in \mathcal{P}^{\textup{bad}}_{i,m}(j)$ (or more explicitly $C_{i,m}(j)=(m-i)(Ql+r)+j$) and $\ell, \ell' \ge 0$. The functions $p_{=i}$ and $[q^\ell]\qbinom{n+(k-i)}{k-i}$ count $\mu$ and $\lambda$ in $\mathcal{P}^{\textup{bad}}_{i,m}$ respectively in (3). Note that
\[p_{=i}(\ell) = p_{\le i}(\ell-i),\]
an explicit bijection being given by taking $\tau \vdash n$ counted by $p_{=i}$ and decreasing each part by one. Thus, we see $p_{=i}$ has period $Q$ as $\pi_N(i) | \pi_N(k) | Q$. We claim that the map $j \mapsto j+Q$ leaves $p_{\le k}^{(m)}(j)$ unchanged modulo $N$, or that $p_{\le k}^{(m)}(j+Q)-p_{\le k}^{(m)}(j) \equiv 0 \pmod{N}$. Note here that $j+Q<n$, otherwise this statement does not make sense (so for example, $l \ge 1$ so we have a complete block $\mathsf{B}_i$). Since $p_{\le k}$ has period $\pi_N(k) \mid \pi'_N(k)$, the function $p_{\le k}$ will vanish in the difference. So it suffices to show that $\Delta:=\#\mathcal{P}^{\textup{bad}}_{i,m}(j+Q)-\#\mathcal{P}^{\textup{bad}}_{i,m}(j)\equiv 0 \pmod{N}$.

\begin{align*}\Delta &\equiv {\sum_{\ell+\ell' = C_{i,m}(j+Q)}{[q^\ell]\qbinom{n+(k-i)}{k-i}p_{=i}(\ell')}}-{\sum_{\ell+\ell' = C_{i,m}(j)}{[q^\ell]\qbinom{n+(k-i)}{k-i}p_{=i}(\ell')}}\\
&\equiv \sum_{\substack{\ell+\ell' \equiv j \\ (\textup{mod } Q)}}{p_{\le k-i}^{(m-i)}(\ell)p_{=i}(\ell')}. \pmod{N} \tag{4}
\end{align*}
The final congruence requires some elaboration. Here, we observe that $\ell$ determines $\ell'$ entirely, and that $C_{i,m}(j+Q)-C_{i,m}(j)=Q$. Since $p_{=i}(\ell')$ has period $Q$ modulo $N$, we can add $Q$ to $\ell'$ to eliminate all terms except for $C_{i,m}(j)< \ell \le C_{i,m}(j+Q)$. The exact bounds for $\ell$ are unimportant, since $p_{=i}(0)=0$ and hence we can ignore terms where $\ell'\equiv 0 \pmod{Q}$ in the sum - thus, equivalently we have $C_{i,m}(j)\le \ell \le C_{i,m}(j+Q)$ or $C_{i,m}(j)\le \ell < C_{i,m}(j+Q)$. Hence, this corresponds to $p_{\le k-i}^{(m-i)}(\overline{\ell}):=[q^{(m-i)n+\overline{\ell}}]\qbinom{n+(k-i)}{k-i}$ for $\overline{\ell}\in \Z/Q\Z$. Note that since $l\ge 1$, this stays within bounds for $\ell$ for $p_{\le k-i}^{(m-i)}(\ell)$.

In the final sum in (4), $\ell, \ell' \in \Z/Q\Z$. It follows immediately from the definition of $\pi'_N(k)$ that $\pi_{N}'(k)/\pi_{N}'(k-1) \in N\mathbb{Z}$. But note that $p_{\le k-i}^{(m-i)}$ and $p_{=i}$ have periods dividing $\pi_{N}'(k-1)$ as it is always true that $1\le i\le k-1$ for each sum, and hence residues modulo $N$ are repeated some multiple of $N$ times in the sum. Thus $p_{\le k}^{(m)}(j)$ is $Q$-periodic since the sum is $0$ modulo $N$, and by strong induction the same is true for each $\mathsf{S}_m$. This completes the induction.

Then $l \mapsto l+1$ simply adds on another identical period in each $\mathsf{S}_m$. Hence, we may write
\[\mathsf{S}_i = \mathsf{B}_i \oplus \mathsf{B}_i \oplus \ldots \oplus \mathsf{B}_i \oplus \mathsf{R}_i,\]
where the $\mathsf{B}_i$ are identical modulo $N$. For short, denote this $\mathsf{S}_i = \mathsf{B}_i^{\oplus l}\oplus\mathsf{R}_i$. More importantly, this indicates that  $f_{k}(Q(l+1)+r)-f_{k}(Ql+r)$ is a constant depending on $r$. Thus, we can write
\[f_{k}(n)=\mathcal{L}_{k}^{(i)}\left(\frac{n-i}{Q}\right),\]
which is precisely what we wanted.
\end{proof}

The decomposition used in the Theorem \ref{main} also allows us to prove the following observation about a special case of $p_{\le k}^{(m)}$:

\begin{corollary}
The last $\binom{k+1-m}{2}-1$ entries of each component $\mathsf{B}_m^i$ of $\mathsf{S}_m$ are $0$ modulo $N$ for $\qbinom{\pi'_N(k)l+k}{k}$. 
\end{corollary}

\begin{proof}
This is given for $m=0$ by Corollary \ref{28}, so let $m>0$. Similarly, $k=2$ is trivial. We proceed by strong induction on $k,m$. By Lemma \ref{34}, for $j \in \mathbb{Z}/\pi'_N(k)l\mathbb{Z}$ we have

\[p_{\le k}^{(m)}(j) \equiv p_{\le k}(j)- \sum_{i\in[m]}{\#\mathcal{P}_{i,m}^{\textup{bad}}}(j) \pmod{N}.\]
Noting that $\binom{a}{2} < \binom{b}{2}$ when $a<b$ we see for $j\in[\pi'_N(k) - \binom{k+1-m}{2}, \pi'_N(k)-1]$ that $p_{\le k}(j)\equiv 0 \pmod{N}$ by Corollary \ref{28}. Therefore, for such $j$ we have the simplified form
$p_{\le k}^{(m)}(j) \equiv -\sum_{i\in [m]}{\#\mathcal{P}_{i,m}^{\textup{bad}}(j)} \pmod{N}$. We wish to show $\sum_{i\in [m]}{\#\mathcal{P}_{i,m}^{\textup{bad}}(j)} \equiv 0 \pmod{N}$ for such $j$. To do this, we use the expansion of $\#\mathcal{P}_{i,m}^{\textup{bad}}(j)$ from the main theorem and exploit that $\#\mathcal{P}^{\textup{bad}}_{i,m}(j+\pi'_N(k))\equiv \#\mathcal{P}^{\textup{bad}}_{i,m}(j) \pmod{N}$ to obtain
\begin{align*}
\#\mathcal{P}_{i,m}^{\textup{bad}}(j) &\equiv \sum_{\ell+\ell'=j}{[q^\ell]\qbinom{n+(k-i)}{k-i}p_{=i}(\ell')} \pmod{N} \\
&\equiv \sum_{\substack{\ell+\ell' \equiv j \\ (\textup{mod }\pi'_N(k))}}{p_{\le k-i}^{(0)}(\ell)p_{=i}(\ell')} \pmod{N}
\end{align*}
The final congruence is because the last $\binom{(k-i)+1}{2}-1 \ge \binom{k+1-m}{2}-1$ entries of $[q^\ell]\qbinom{n+(k-i)}{k-i}$ are $0$ by Corollary \ref{28} and Lemma \ref{21} as $\ell < \pi'_N(k)$. All added terms in the summation will have $\ell, \ell'$ lie in the interval $[j, \pi'_N(k)-1]$ and our specifically chosen $j$ makes it so that each new term added must then be $0$ mod $N$. The final sum is $0$ modulo $N$ for the same reasons as in the main theorem, since $k-1\ge i\ge 1$ and hence as a function of $\ell$ the function $p_{\le k-i}^{(0)}(\ell)p_{=i}(\ell')$ has a period $P$ so $\pi'_N(k)/P\in N\Z$ by definition of $\pi'_N(k)$.
\end{proof}

\begin{remark}
Using the symmetry of the $q$-binomial coefficients, we can show that a similar claim holds for the first entries of $\mathsf{B}_m^i$. Explicitly, the first $\binom{k+1-((k-1)-m)}{2}-1=\binom{m+2}{2}-1$ entries are $0$ modulo $N$.
\end{remark}

\section[The generating function]{The generating function of $f_{k}$}

The result from the previous section allows for the generating function for $f_{k}$ to be explicitly calculated.

\begin{theorem}
For a modulus $N \in \N$, we have
\[F_{k}(x) \eqdef \sum_{n\ge 0}{f_{k}(n)x^n} = \frac{1}{(1-x^{Q})^2}\sum_{i \in \Z/Q\Z}{(1-x^{Q})b_ix^i+m_ix^{Q+i}},\]
where $\mathcal{L}_{k}^{(i)}$ has constant term $b_i$ and slope $m_i$ and $Q=\pi'_N(k)$.
\label{41}
\end{theorem}
\begin{proof}
For simplicity, let $\mathcal{L}_i =\mathcal{L}_{k}^{(i)}$, and $Q=\pi'_N(k)$ as above. Then we let $F_{\mathcal{L}_i}(x) = \sum_{j\ge 0}{\mathcal{L}_i(j)x^{jQ + i}},$ so that
\[F_{k}(x) = \sum_{i\in \Z/Q\Z}{F_{\mathcal{L}_i}(x)}.\]
Fortunately, each term is simple to find. We have
\begin{align*}
\sum_{i\in \Z/Q\Z}{F_{\mathcal{L}_i}(x)} &= \sum_{i\in \Z/Q\Z}{x^i \left(\sum_{j\ge 0}{b_ix^{jQ}}+\sum_{j\ge 0}{m_i j x^{jQ}} \right)}\\
&= \sum_{i\in \Z/Q\Z}{x^i \left(\frac{b_i}{1-x^{Q}} + \frac{m_ix^{Q}}{(1-x^{Q})^2}\right)} \\ &=\frac{1}{(1-x^Q)^2}\sum_{i\in \Z/Q\Z}{(1-x^{Q})b_ix^i+m_ix^{Q+i}}
\end{align*}
which proves the theorem.

\end{proof}

Letting $Q'=\pi'_N(k-1)$, it turns out that one can often rewrite this as 
\[F_{k}(x)=\frac{1}{(1-x^{Q})^2}\left(\sum_{i\in \Z/Q\Z}{(1-x^{Q})b_ix^i+\sum_{i\in \Z/Q'\Z}{m_ix^{Q+i}\frac{x^{Q}-1}{x^{Q'}-1}}}\right).\]
This stems from the fact that the slopes of the functions $\mathcal{L}_{k}^{(i)}$ often have a smaller period (in $i$, where $k,R$ are fixed) than the actual quasiperiod itself, namely $Q'$. This is formalized by Theorem \ref{period}, and an example is given in Figure \ref{illustration}.

\begin{figure}[!ht]
\begin{center}
\includegraphics[scale=0.7]{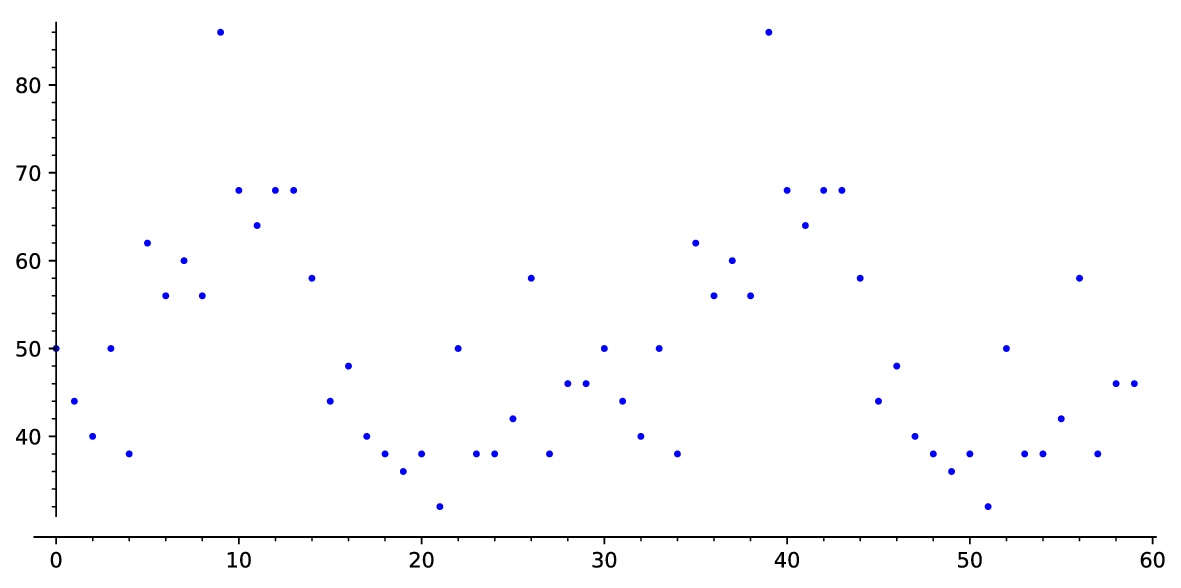}
\end{center}
\caption{\textbf{An illustration of Theorem \ref{period}.} This figure shows the slopes of the functions $\mathcal{L}_{4}^{(i)}$ when $N=5, R=1$. The horizontal axis is $i$, while the vertical axis is the slope. Notice that the slopes have a period that is half of the actual minimal quasiperiod (in this case, given by the function $\pi_5$) and $\pi_5(4)/\pi_5(3)=60/30=2,$ as claimed.}
\label{illustration}
\end{figure}

\begin{theorem}
Let $\qbinom{n+k}{k}$ be a $q$-binomial coefficient where $n>\pi'_N(k)$ so that we have an entire block $\mathsf{B}_i$ in each section. Fix a pair $(k,N)$ such that $\sum_{i\in \Z/\pi'_N(k)\Z}{p_{\le k}(i)}\equiv 0 \pmod{N}$ by Theorem \ref{29}. Then $\sum_{i\in \Z/\pi'_N(k)\Z}{p^{(m)}_{\le k}(i)}\equiv 0 \pmod{N}$.
\label{0sum}
\end{theorem}

\begin{proof}
We use Lemma \ref{34}. This yields, following the same expansion as in Theorem \ref{main},
\[p_{\le k}^{(m)}(j) \equiv p_{\le k}(j+mn)-\sum_{i\in[m]}\left({\sum_{\ell+\ell' = C_{i,m}(j)}{[q^\ell]\qbinom{n+(k-i)}{k-i}p_{=i}(\ell')}}\right) \pmod{N}.\]
Summing over $j\in \Z/\pi'_N(k)\Z$, by assumption the $p_{\le k}$ term disappears since $p_{\le k}$ has period $\pi_N(k)$. It suffices to show that
\[\sum_{j\in \Z/\pi'_N(k)\Z} {\sum_{\ell+\ell' = C_{i,m}(j)}{[q^\ell]\qbinom{n+(k-i)}{k-i}p_{=i}(\ell')}} \equiv 0 \pmod{N}.\]
We can equivalently sum over $\ell$ and then $\ell'$ so that $\ell + \ell' \in [C_{i,m}(0),C_{i,m}(\pi'_N(k)-1)]$. Suppose that $\ell \le C_{i,m}(0)$. Observe that \[\sum_{j\in \Z/\pi'_N(k)\Z}{[q^\ell]\qbinom{n+(k-i)}{k-i}p_{=i}(\ell'+j)}\equiv 0 \pmod{N},\]
since $[q^\ell]\qbinom{n+(k-i)}{k-i}$ is fixed and $p_{=i}(\ell)$ is $p_{\le i}(\ell-i)$ and so this sum becomes $0$ by our assumption. 

We need to show that the sum of all terms $\ell>C_{i,m}(0)$ is zero. Our reformulation gives us a sum over $\ell$ and $\ell'$ so $\ell+\ell' \le C_{i,m}(\pi'_N(k)-1)$ and $\ell>C_{i,m}(0)$. Although we can ignore $\ell=C_{i,m}(0)$ because it have no contribution, we include it in the following calculation to make it simpler. We can observe that by the restrictions on $n$ that $[q^{\ell}]\qbinom{n+(k-i)}{k-i}$ stays in the same section. Let $Q=\max(\pi'_N(k-i),\pi'_N(i))$. Each possible pairing $([q^{\ell}]\qbinom{n+(k-i)}{k-i}, p_{=i}(\ell'))$ modulo $N$ is repeated a multiple of $\sum_{x\in[\pi'_N(k)/Q]}{x}=\frac{1}{2}(\pi'_N(k)/Q)(\pi'_N(k)/Q+1)$ times due to their respective periods. But since $N$ divides $\pi'_N(k)/Q$ ($i\le m \le k-1$) and $N$ is odd (the pairs in Theorem \ref{29} have odd $N$), this is a multiple of $N$ and hence the entire sum becomes $0$ modulo $N$.
\end{proof}

\begin{theorem}
Fix a pair $(k,N)$ such that
\[\sum_{i\in \Z/\pi'_N(k-1)\Z}{p_{\le k-1}(i)}\equiv 0. \pmod{N}\]
by Theorem \ref{29}. Then the slope of $\mathcal{L}_{k}^{(i)}$ is equal to that of $\mathcal{L}_{k}^{(i')}$ where $i' \equiv i+\pi'_N(k-1) \pmod{\pi'_N(k)}$.
\label{period} 
\end{theorem}

\begin{proof}
In order to prove the theorem, we actually make a deeper claim. Consider the $q$-binomial coefficients $\qbinom{n+k}{k}, \qbinom{\widetilde{n}+k}{k}$ where $\widetilde{n} = n+\pi'_N(k-1)$ and decompose the coefficients into $\mathsf{S}_i$ and $\widetilde{\mathsf{S}}_i$ respectively. Then when we make the decompositions $\mathsf{S}_i = \mathsf{B}_i^{\oplus l} \oplus \mathsf{R}_i$ and $\widetilde{\mathsf{S}}_i = \widetilde{\mathsf{B}}_i^{\oplus l} \oplus \widetilde{\mathsf{R}}_i$, we want to show that $\mathsf{B}_i$ is a cyclic shift of $\widetilde{\mathsf{B}}_i$. Since the main theorem implies that $\mathsf{B}_i$ is determined by the residue class of $n \pmod{\pi'_N(k)}$, it suffices to show this for $n$ sufficiently large.

Using Lemma \ref{34}, we have 
\[p_{\le k}^{(m)}(j) \equiv p_{\le k}(j+mn)-\sum_{i\in[m]}\left({\sum_{\ell+\ell' = C_{i,m}(j)}{[q^\ell]\qbinom{n+(k-i)}{k-i}p_{=i}(\ell')}}\right) \pmod{N}\]
where $C_{i,m}(j) = (m-i)n+j$. Now we take $n \mapsto \widetilde{n}=n+\pi'_N(k-1)$ and obtain a function $\widetilde{p}_{\le k}^{(m)}(j)$ for $\widetilde{\mathsf{S}}_m$. If we take $j \mapsto \widetilde{j}=j+m\pi'_N(k-1)$, we claim that
\[p_{\le k}^{(m)}(\widetilde{j}) \equiv  \widetilde{p}^{(m)}_{\le k}(j) \pmod{N}.\]
This is equivalent to $\mathsf{B}_i$ being a cyclic shift of $\widetilde{\mathsf{B}}_i$. Here, we are implicitly taking $\tilde{j} \pmod{\pi'_N(k)}$ and treating $p_{\le k}^{(m)}$ as a periodic sequence - this avoids confusion with $\widetilde{\mathsf{R}}_i$, as the shift is within each block $\mathsf{B}_i \mapsto \widetilde{\mathsf{B}}_i$. Using Lemma \ref{34} again, we see this is equivalent to
\[p_{\le k}(\widetilde{j}+mn)-\sum_{i\in[m]}{\#\mathcal{P}^{\textup{bad}}_{i,m}(\widetilde{j})} \equiv p_{\le k}(j+m\widetilde{n})-\sum_{i\in[m]}{\#\mathcal{P}^{\textup{bad}}_{i,m}(j)} \pmod{N}.\]

As $\widetilde{j}+mn=j+m\widetilde{n}$, we need only consider the sums $\sum_{i\in [m]}{\#\mathcal{P}^{\textup{bad}}_{i,m}(\cdot)}$. We want to show that this is invariant modulo $N$ under $j \mapsto \widetilde{j}$ and $n\mapsto \widetilde{n}$. We get for individual terms (indexed by $i$) the difference
\[ \sum_{\ell+\ell' = {C}_{i,m}(\widetilde{j})}{[q^\ell]\qbinom{n+(k-i)}{k-i}p_{=i}(\ell')} - \sum_{\ell+\ell' = \widetilde{C}_{i,m}(j)}{[q^\ell]\qbinom{n+(k-i)}{k-i}p_{=i}(\ell')} \pmod{N} \]
where $\widetilde{C}_{i,m}(j) = (m-i)\widetilde{n}+j$, differing from $C_{i,m}(j)$ by a multiple of $\pi'_N(k-1)$. We claim that this is $0$ for $i>1$. Because $p_{=i}$ has period $\pi_N(i) \mid \pi'_N(k-1)$ and $\pi'_N(k-1)\mid {C}_{i,m}(\widetilde{j})-\widetilde{C}_{i,m}(j)$, we can add this quantity to $\ell'$ in the second sum (leaving it unchanged) to cancel terms. After this, we are left with
\[\sum_{\substack{\ell + \ell' = {C}_{i,m}(\widetilde{j}) \\ \ell > \widetilde{C}_{i,m}(j)}}{[q^\ell]\qbinom{n+(k-i)}{k-i}p_{=i}(\ell')} \pmod{N}.\]
The difference is ${C}_{i,m}(\widetilde{j})-\widetilde{C}_{i,m}(j)=i\pi'_N(k-1)$. For $k-1>i>1$, recall the division of residues in $\qbinom{n+(k-i)}{k-i} \pmod{N}$ - the blocks have size $\pi'_N(k-i) = \text{len}(\mathsf{B_\bullet})$ in each section. For $n$ sufficiently large, we can keep all $\ell$ in the same section as $n=\text{len}(\mathsf{S}_\bullet)$ and $i\pi'_N(k-1)$ is independent of $n$. Note also that $\pi'_N(i)$ is a period of $p_{=i}$. As $\pi'_N(k-1)/\pi'_N(k-i), \pi'_N(k-1)/\pi'_N(i)\in N\Z$ for $1<i<k-1$, this implies the entire sum is $0$ because we repeat each period a multiple of $N$ times and either $\pi'_N(i) \mid \pi'_N(k-i)$ or the other way around. At $i=k-1$, the $q$-binomial coefficient has all coefficients $1$ so this becomes $0$ by the restrictions on $(k,N)$. For $i=1$, $p_{=1}(\ell')=1$ and the sum is $0$ again by the restrictions on $(k,N)$ and Theorem \ref{0sum}, since $n$ is large enough that the $\ell$ values stay in the same section and we just sum over a period $i$ times.

Thus, for sufficiently large $n$ we have $p_{\le k}^{(m)}(\widetilde{j}) = \widetilde{p}^{(m)}_{\le k}(j)$. We conclude that $\widetilde{\mathsf{B}}_i$ is a cyclic shift of $\mathsf{B}_i$ and the result follows since the slopes of $\mathcal{L}_{k}^{(i)}$ depend only on the number of occurrences of the residue $R$ in each $\mathsf{B}_i$ (which clearly is the same under a cyclic shift).
\end{proof}

\section{Asymptotics for the quasiperiod}
Given the complex nature of the definition for $\pi'_N(k)$ it is worth investigating asymptotics to understand how quickly $f_{k}(n)$ and its generating function grow in complexity. First we investigate asymptotics for $\pi_p(k)$ for each prime $p$. We have the expansion
\[\pi_p(k) = p^{b_p([k])} L_p([k]),\]
where $b_p([k])$ and $L_p([k])$ are as previously defined in Theorem \ref{22}. Note that $\text{lcm}([k]) = e^{\psi(k)}$ where $\psi(k)$ is the Chebyshev function. Let \[\Pi(k) \eqdef \sum_{i\in[k]}{p^{\nu_p(i)}}.\]
We first consider the asymptotics of this function in Lemma \ref{51}.

\begin{lemma}\[\Pi(k) = \sum_{i\in[k]}{p^{\nu_p(i)}} \sim \frac{p-1}{p}k\log_p(k).\]
\label{51}
\end{lemma}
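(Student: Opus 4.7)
The plan is to rewrite $\Pi(k)$ by grouping terms according to $p$-adic valuation, then handle the resulting sum with elementary estimates involving floor functions.

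First, I partition $[k]$ by $p$-adic valuation: the number of $i \in [k]$ with $\nu_p(i) = j$ equals $\lfloor k/p^j \rfloor - \lfloor k/p^{j+1} \rfloor$, and the sum is supported on $0 \le j \le J := \lfloor \log_p k \rfloor$. Hence
\[\Pi(k) = \sum_{j=0}^{J} p^j \bigl( \lfloor k/p^j \rfloor - \lfloor k/p^{j+1} \rfloor \bigr).\]
A short Abel summation, using $\lfloor k/p^{J+1} \rfloor = 0$, collapses this to
\[\Pi(k) = \lfloor k \rfloor + (p-1)\sum_{j=1}^{J} p^{j-1} \lfloor k/p^j \rfloor = k + \frac{p-1}{p} \sum_{j=1}^{J} p^j \lfloor k/p^j \rfloor.\]

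Next, I estimate each inner term. Since $p^j \lfloor k/p^j \rfloor = k - (k \bmod p^j)$, the bound $|p^j \lfloor k/p^j \rfloor - k| < p^j$ is immediate, so
\[\sum_{j=1}^{J} p^j \lfloor k/p^j \rfloor = kJ + O\Bigl(\sum_{j=1}^{J} p^j\Bigr) = kJ + O(p^{J+1}) = kJ + O(k),\]
where the last step uses $p^{J+1} \le p \cdot k$. Substituting $J = \log_p k + O(1)$ then yields $\Pi(k) = \frac{p-1}{p}\, k \log_p k + O(k)$, and the asymptotic claim follows because $k = o(k \log_p k)$.

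No serious obstacle is expected; the argument is essentially careful bookkeeping. The only mild care needed is in controlling the cumulative error from removing floor functions, which ultimately costs an additive $O(k)$ that is absorbed into the error term. If one wanted a fully explicit secondary term, it would amount to tracking $\sum_{j=1}^{J} (k \bmod p^j)$ in the Abel summation rather than bounding it, but this refinement is unnecessary for the asymptotic equivalence stated.
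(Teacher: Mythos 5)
Your proof is correct and follows essentially the same route as the paper: partition $[k]$ by $p$-adic valuation, express the counts via $\lfloor k/p^j\rfloor - \lfloor k/p^{j+1}\rfloor$, telescope, and then replace each $p^j\lfloor k/p^j\rfloor$ by $k+O(p^j)$ to extract the main term $\frac{p-1}{p}k\log_p k$. Your Abel summation is in fact a bit more careful than the paper's (you correctly keep the $j=0$ term $a_0=k$ separate, whereas the paper's telescoping implicitly absorbs it with a small algebraic slip that is harmless at the asymptotic level), so no gap here.
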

\begin{proof}
This can be done by observing that $\sum_{i\in[k]}{p^{\nu_p(i)}} = \sum_{i=0}^{\lfloor \log_p(k) \rfloor}{\#V_{i,p} p^i}$, where $V_{i,p} = \{ j | j\in[k],  \nu_p(j) = i\}$. Now we can take $\#V_{i,p} = \left\lfloor \frac{k}{p^i}\right\rfloor - \left\lfloor \frac{k}{p^{i+1}} \right\rfloor$, yielding
\[\sum_{i=0}^{\lfloor \log_p(k) \rfloor}{\#V_{i,p} p^i} = \sum_{i\ge 0}{ \left( \left\lfloor \frac{k}{p^i}\right\rfloor - \left\lfloor \frac{k}{p^{i+1}} \right\rfloor \right)p^i} \sim  (p-1) \left(\sum_{i\ge 1}{\left\lfloor \frac{k}{p^i} \right\rfloor p^{i-1}}\right).\]
We now want to show $\sum_{i\ge 1}{\left\lfloor \frac{k}{p^i} \right\rfloor p^{i-1}} \sim \frac{k\log_p(k)}{p}$ for large $k$. That is, we want to show that $\lim_{k \to \infty}{\frac{k\log_p(k)}{\sum_{i\ge 1}{\left\lfloor \frac{k}{p^i} \right\rfloor p^{i}}}} = 1$. We obtain upper and lower bounds for the limit via $\frac{k}{p^i}\ge \lfloor \frac{k}{p^i} \rfloor \ge \frac{k}{p^i}-1$, yielding the bounds
\[k\log_p(k) \ge \sum_{i\ge 1}{\left\lfloor \frac{k}{p^i} \right\rfloor p^{i}} \ge k\bra{\log_p(k)-1-\frac{p}{p-1}}. \]
Upon dividing we see
\[\frac{k\log_p(k)}{\sum_{i\ge 1}{\left\lfloor \frac{k}{p^i} \right\rfloor p^{i}}} \le \frac{k\log_p(k)}{k(\log_p(k)-1-\frac{p}{p-1})} = 1+\frac{(1+\frac{p}{p-1})k}{k\log_p(k)-(1+\frac{p}{p-1})k}.\]
Thus, the limit is bounded above by 1. The lower bound clearly goes to 1, and we conclude that
\[\sum_{i\in[k]}{p^{\nu_p(i)}} \sim \frac{p-1}{p}k\log_p(k).\] 
\end{proof}

\noindent The following lemma will be useful in understanding $L_p([k])$.

\begin{lemma}
$\nu_p(\textup{lcm}([k])) = \lfloor \log_p(k)\rfloor$.
\label{52}
\end{lemma}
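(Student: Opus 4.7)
The plan is to reduce this to a statement about the maximum $p$-adic valuation among integers in $[k]$, then extract that maximum by a direct size comparison with powers of $p$.

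First I would invoke the standard property that for any finite set $S\subset \N$, $\nu_p(\lcm(S)) = \max_{s\in S}{\nu_p(s)}$. This is immediate from the prime factorization characterization of $\lcm$, and it reduces the problem to showing
\[\max_{i\in [k]}{\nu_p(i)} = \lfloor \log_p(k) \rfloor.\]

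For the lower bound, let $e = \lfloor \log_p(k) \rfloor$. Then $p^e \le k$, so $p^e \in [k]$, and $\nu_p(p^e) = e$. Thus $\max_{i\in[k]}{\nu_p(i)} \ge e$. For the upper bound, suppose some $i \in [k]$ achieves $\nu_p(i)=e'$. Then $p^{e'} \mid i$ forces $p^{e'} \le i \le k$, whence $e' \le \log_p(k)$, and since $e'$ is an integer, $e' \le \lfloor \log_p(k) \rfloor = e$. Combining both bounds gives equality, which is the claim.

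There is no real obstacle here: the argument is a two-step unpacking of the definitions of $\lcm$, $\nu_p$, and $\lfloor \cdot \rfloor$. The only mild subtlety is remembering to use integrality to pass from $e' \le \log_p(k)$ to $e' \le \lfloor \log_p(k) \rfloor$, but this is routine.
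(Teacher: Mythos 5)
Your proof is correct and is the natural one; the paper in fact states this lemma without supplying a proof, so there is nothing to compare against. Both steps — reducing $\nu_p(\lcm([k]))$ to $\max_{i\in[k]}\nu_p(i)$ via prime factorization, and then pinning that maximum to $\lfloor\log_p(k)\rfloor$ by the two-sided bound using $p^{\lfloor\log_p k\rfloor}\le k$ for the lower bound and $p^{\nu_p(i)}\mid i\le k$ for the upper bound — are exactly as one would expect, and your care in using integrality to pass from $e'\le\log_p(k)$ to $e'\le\lfloor\log_p(k)\rfloor$ is the only point requiring any attention.
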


\noindent Now we can make an asymptotic analysis for the log of $\pi_p(k)$, since we have asymptotics relating to both components of $\pi_p(k)$.

\begin{theorem} We have
\begin{align*}
\log_p(\pi_p(k)) &\sim \log_p\log_p(k) + \frac{\psi(k)}{\ln p}.
\end{align*}
\label{53}
\end{theorem}

\begin{proof}
Using Lemma \ref{52}, we see
\[\pi_p(k) = p^{b_p([k])}L_p([k]) = \text{lcm}([k])p^{b_p([k]) - \lfloor \log_p(k) \rfloor}.\]
This can be simplified to 

\begin{align*} 
\log_p\pi_p(k) &= \frac{\psi(k)}{\ln p}+\bra{b_p([k]) - \lfloor \log_p(k) \rfloor} \\
&=\frac{\psi(k)}{\ln p}+\bra{\lceil \log_p\Pi(k) \rceil - \lfloor \log_p(k) \rfloor}.  \tag{5}
\end{align*}
We use Lemma \ref{51} to show $\log_p\Pi(k) \sim \log_p\left(\frac{p-1}{p}k\log_p(k) \right)$. Ignoring constant terms, we simplify this asymptotic to
\[\log_p\Pi(k) \sim \log_p\log_p(k)+\log_p(k),\]
and in the limit floors become irrelevant so the $\log_p(k)$ term is cancelled in (5), yielding the desired asymptotic.
\end{proof}

\begin{lemma}
$\log_p\bra{\frac{\pi_p'(k)}{\pi_p(k)}} \sim k-\log_p(k)-\log_p\log_p(k)$
\label{54}
\end{lemma}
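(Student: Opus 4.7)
The plan is to unroll the recursion defining $\pi'_p(k)$, reduce the computation of $\log_p(\pi'_p(k)/\pi_p(k))$ to counting the jumps of $b_p([j])$, and then invoke Lemma \ref{51}.

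Setting the natural base case $\pi'_p(0) = \pi_p(0) = 1$, Definition \ref{18} telescopes to $\pi'_p(k) = \pi_p(k)\prod_{j=1}^{k}\epsilon_j$, where $\epsilon_j = 1$ if $p \mid \pi_p(j)/\pi_p(j-1)$ and $\epsilon_j = p$ otherwise. Hence
\[
\log_p\!\bra{\frac{\pi'_p(k)}{\pi_p(k)}} \;=\; k - J(k), \qquad J(k) \eqdef \#\set{j \in [1,k] : p \mid \pi_p(j)/\pi_p(j-1)},
\]
so it suffices to prove $J(k) \sim \log_p k + \log_p\log_p k$.

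To analyze $J(k)$, I would use the factorization $\pi_p(j) = p^{b_p([j])}L_p([j])$ from Theorem \ref{22}. Since $L_p([j])$ is the $p$-free part of $\textup{lcm}([j])$, the ratio $L_p([j])/L_p([j-1])$ is either $1$ or a prime $q \ne p$, and in particular coprime to $p$. Consequently the $p$-adic valuation $v_p(\pi_p(j)/\pi_p(j-1))$ equals $b_p([j]) - b_p([j-1])$, so $j$ contributes to $J(k)$ precisely when $b_p$ strictly increases at $j$.

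The main technical step is to show that every jump of $b_p$ has size exactly $1$ for $j$ large; granted this, $J(k) = b_p([k]) + O(1)$ by telescoping. The bound follows from the identity $\Pi(j) - \Pi(j-1) = p^{\nu_p(j)} \le j$ together with $\Pi(j-1) \ge j-1$: a short estimate yields $\Pi(j) < p\cdot \Pi(j-1)$ for all $j$ sufficiently large, forcing $b_p([j]) - b_p([j-1]) \le 1$. Combining this with $b_p([k]) = \lceil \log_p \Pi(k)\rceil$ and the asymptotic $\log_p \Pi(k) \sim \log_p k + \log_p\log_p k$ from Lemma \ref{51} delivers the claim. The only delicate ingredient is the no-large-jump bound, which is elementary; the rest is bookkeeping and reuse of earlier results.
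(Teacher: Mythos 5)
Your proposal is essentially the paper's own proof: both telescope Definition \ref{18} to get $\log_p(\pi'_p(k)/\pi_p(k)) = k - \#\{j \le k : b_p([j]) > b_p([j-1])\}$, identify this count with $b_p([k]) = \lceil\log_p\Pi(k)\rceil$ up to bounded error, and then invoke the $\Pi(k)$ asymptotic from Lemma \ref{51}. You are somewhat more careful than the paper in flagging that one must rule out jumps of $b_p$ of size $\ge 2$ (the paper asserts $\#S_k = k - b_p([k])$ without comment, which is only correct up to $O(1)$); note, though, that your ``short estimate'' $\Pi(j-1)\ge j-1$ alone is insufficient at $p=2$, and you genuinely need the $\log_p(j)$ growth factor from Lemma \ref{51} to force $p^{\nu_p(j)} < (p-1)\Pi(j-1)$ for large $j$.
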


\begin{proof}
Note that
\[\frac{\pi'_p(k)}{\pi_p(k)} = p^{\#S_k},\]
where
\[S_k = \left\{i : i\le k, \frac{\pi_p(i)}{\pi_p(i-1)}\not \in p\mathbb{Z}\right\}.\]
The condition in $S_k$ can be re-written in terms of the p-adic valuation as $\nu_p(\pi_p(i)) = \nu_p(\pi_p(i-1))$. But this valuation is just $b_p([i])$, so we really have $\#S_k = k-b_p([k])$. Now we can write
\[\#S_k = k - \left\lceil \log_p \left( \Pi(k) \right)\right\rceil\]
where we already have an asymptotic formula for $\Pi(k)$. We can obtain
\[\#S_k \sim k - \log_p\left(\frac{p-1}{p}k\log_p(k)\right).\]
This implies that
\begin{align*}
\log_p \left( \frac{\pi'_p(k)}{\pi_p(k)} \right) &= \#S_k \\ 
&\sim k - \log_p\left(\frac{p-1}{p}k\log_p(k)\right).
\end{align*}
Ignoring constants in the above asymptotic, we obtain the desired asymptotic.
\end{proof}

By understanding $\pi_p$, we can easily derive formulas for $\pi'_p$ by simply accounting for a power of $p$ as above. That is,
\begin{align*}
\log_p \pi'_p(k) &\sim \log_p \pi_p(k) + k - \log_p(k) - \log_p \log_p(k)\\
&\sim \frac{\psi(k)}{\ln p}+k - \log_p(k),
\end{align*}
where we have already bounded $\pi_p$ via Theorem \ref{53}. The next task is to understand $\pi'_{p^e}(k)$. The lemma below yields an asymptotic estimate for $\log_p \pi'_{p^e}(k)$ through Theorem \ref{53} and Lemma \ref{54}.

\begin{lemma}
For prime powers of an odd prime, we have the formula 
\[\pi'_{p^e}(k) = p^{(k-1)(e-1)}\pi'_p(k). \tag{6}\]
For $p=2$, this is off by a constant factor of $\frac{1}{2}$ if $e>1$.
\end{lemma} 

\begin{proof} The idea behind this is to show $\nu_p(\pi_p(j+1)/\pi_p(j))\le 1$ for $j\ge 2$. This follows from $b_p([j+1])-b_p([j])\le 1$, so we prove this claim instead. Observe that $\Pi(j+1)-\Pi(j)\le j+1$. Let $p^i \le j$ be the largest prime power of $p$ less than or equal to $j$. But then 
\[\Pi(j+1) \le p^{b_p([j])} + j+1 \le p^{b_p([j])}+p^{i+1} \le p^{b_p([j])+1}.\]
Note that $b_p([j]) \ge i+1$, since $j\ge 2$ implies $\Pi(j)>p^i$. Then we conclude the final inequality from $p^{b_p([j])}+p^{i+1} = (1+p^{-\ell})p^{b_p([j])}$ for some $\ell \ge 0$, and $p\ge 1+p^{-\ell}$ as $p\ge 2$. It follows that $b_p([j+1])-b_p([j])\le 1$ and $\nu_p(\pi_p(j+1)/\pi_p(j))\le 1$ for $j\ge 2$.

Suppose we know $\nu_p(\pi_p(j+1)/\pi_p(j))\le 1$ at $j$. First consider the case when $\nu_p(\pi_p(j+1)/\pi_p(j)) = 1$. By definition 
\[\pi'_{p^e}(j+1) = \pi'_{p^e}(j) \lcm \left(p^e, \frac{\pi_p(j+1)}{\pi_p(j)}\right)\]
since $\pi_p(j+1)/\pi_p(j) = \pi_{p^e}(j+1)/\pi_{p^e}(j)$. In this case, to obtain $\pi'_{p^e}(j+1)$ we scale $\pi'_{p^e}(j)$ by $p^{e-1}$ in addition to scaling by $\frac{\pi_p(j+1)}{\pi_p(j)}$. In the case where $\nu_p(\pi_p(j+1)/\pi_p(j)) = 0$, the very same reasoning shows we scale $\pi'_{p^e}(j)$ by $p^{e}$ in addition to scaling by $\frac{\pi_p(j+1)}{\pi_p(j)}$. 

Using this, we can obtain the exact formula for $\pi'_{p^e}(k)$ recursively if we assume $\nu_p(\pi_p(j+1)/\pi_p(j))\le 1$ holds for $1\le j < k$ (we proved it for $j\ge 2$). In $k-1$ recursive steps, we arrive at $\pi'_{p^e}(1):=1$. Using the results above, we obtain 
\[\pi'_{p^e}(k) = \prod_{1\le j<k} p^{e-\nu_p\left(\frac{\pi_p(j+1)}{\pi_p(j)}\right)} \frac{\pi_p(j+1)}{\pi_p(j)}\]
by compactly summarizing the factors across the two cases $\nu_p\left(\frac{\pi_p(j+1)}{\pi_p(j)}\right)=1$ and $0$. Since these are the only possibilities, when we pull out $p^{(e-1)(k-1)}$ we see that the factor which remains is $p^\ell (\prod_{1\le j < k}\frac{\pi_p(j+1)}{\pi_p(j)})$ where $\ell = \#\{j: 1\le j < k, \nu_p\left( \frac{\pi_p(j+1)}{\pi_p(j)}\right)=0\}$ is the number of times we have $p^e$ instead of $p^{e-1}$. Applying the same recursive computation in the $e=1$ case for $\pi'_p(k)$, this is $\pi'_p(k)$. Hence, assuming $\nu_p(\pi_p(j+1)/\pi_p(j))\le 1$ for all steps we obtain $\pi'_{p^e}(k) = p^{(k-1)(e-1)}\pi'_p(k)$.

For $p\neq 2$, we encounter no issues because we still have $\nu_p(\pi_p(2)/\pi_p(1))\le 1$, and so the exact formula holds. This is because $b_p([2])-b_p([1])=1-0=1$. For $p=2$ this does not hold, but our description of the recursive steps holds for all but the $j=1$ step where it is off by a constant factor, and so for $p=2$ the formula is off by a constant factor which is straighforward to compute.
\end{proof}

A useful consequence of this lemma is that
\[\nu_q(\pi'_{p^e}(k)) \sim \nu_q(p^{(k-1)(e-1)} \pi'_p(k))\]
for any prime $q$ since for odd primes these are equal and for $p=2$ they differ by a constant. Similarly, replacing $\nu_q$ with $\log_q$ or $\ln$ we have asymptotics for the logs.

We wish to use this result to understand the growth of $\ln \pi'_N(k)$. We can obtain a loose upper bound on $\ln \pi'_N(k) = \ln \lcm_{p\mid N}{\pi'_{p^{\nu_p(N)}}(k)}$ by taking the product of these asymptotics. However, we can do better. As $k\to \infty$, we simply need to identify which term has the largest power of $p$ associated to it for a given $p$. Set $\nu_q(N) = e_q$. For a prime $q\mid N$ not equal to $p$, we get \[\nu_p(\pi'_{q^{e_q}}(k)) \sim \nu_p\left(q^{(k-1)(e_q-1)} \pi'_q(k)\right)=\nu_p(\pi'_q(k)).\]
The last equality is because $\nu_p(q)=0$. We have $\nu_p(\pi'_q(k)) \sim \log_p(k)$ by Lemma \ref{52}. For $p$, we get
\[\nu_p\left(p^{(k-1)(e_p-1)}\pi'_p(k)\right)=\nu_p(\pi'_p(k)) + (k-1)(e_p-1).\]
Asymptotically, it is clear that this will quickly become dominant. We have
\[\nu_p \bra{\pi'_p(k)} \sim k-\log_p(k)-\log_p \log_p(k) + \log_p \Pi(k)\]
by Lemma \ref{54}, $\nu_p(\pi_p(k))=b_p([k])\sim \log_p \Pi_p(k)$, and that $\pi'_p(k)/\pi_p(k)$ is a power of $p$. Thus, using Lemma \ref{51} we see that only the $k$ is relevant for the asymptotic, so for $p\mid N$ we have $\nu_p(\pi'_N(k)) \sim \nu_p(\pi'_{p^{e_p}}(k)) \sim ke_p$. For primes $p\nmid N$, we get an overall contribution to $\ln \pi'_N(k)$ in the limit of $\ln(\lcm([k]))-\sum_{p \mid N}\ln k$, where the sum comes from Lemma \ref{52}. Putting these together, since the logs in the sum are dominated by other terms we have
\begin{align*}\ln \pi'_N(k) &\sim \sum_{p\mid N}{k e_p \ln p} + \psi(k) \\
&= k\ln N + \psi(k).
\end{align*}
Thus, we have proven the following theorem:

\begin{theorem}
We have
\[\ln \pi'_N(k) \sim k\ln N + \psi(k).\]
\end{theorem}

\section{Conclusion and future directions}

We have shown that the function $f_{k}(n)$ is quasipolynomial modulo any $N \in \N$, from which an explicit formula for the generating function $F_{k}(x)$ follows. Additionally, the structure of the coefficients of $\qbinom{n}{k}$ has been described in terms of the sections $\mathsf{S}_i$ of that $q$-binomial coefficient, and the repeating period in each section has been shown to retain some of the properties of $\mathcal{S}$. 

A good future direction is to determine the minimal quasiperiod of $f_{k}(n)$. It is expected to lie somewhere between $\pi_N(k)$ and $\pi'_N(k)$ but it is unclear how the function actually behaves.

It is also interesting to investigate symmetry in the minimal period of the slopes of the functions $\mathcal{L}_{k}^{(i)}$ --- if we let this period be $P$, we mean to determine when the slope of $\mathcal{L}_{k}^{(i)}$ matches that of $\mathcal{L}_{k}^{(P-i)}$ for $0\le i \le P$. Figure \ref{illustration} gives a counterexample (the slopes for $0\le i < 30$ are not symmetric in this way), but in many examples the pattern holds true.

\subsection*{Acknowledgements}

The author thanks the MIT PRIMES program for making this research possible, as well as Younhun Kim for his guidance and Professor Stanley for suggesting the problem and giving feedback. Additionally, the author would like to thank the anonymous reviewer for several comments that improved the paper. 

This material is based upon work supported by the National Science Foundation under grant no. DMS-1519580.

\end{document}